\numberwithin{equation}{subsection}
\newtheorem{theorem}{Theorem}[section]
\newtheorem*{theorem*}{Theorem}
\newtheorem{lemma}[theorem]{Lemma}
\newtheorem{proposition}[theorem]{Proposition}
\newtheorem*{corollary*}{Corollary}
\theoremstyle{remark}
\newtheorem{definition}[theorem]{Definition}
\theoremstyle{remark}
\theoremstyle{remark}
\theoremstyle{remark}
\DeclareMathOperator{\id}{id}
\newcommand{\Ho}{\mathsf{Ho}}
\newcommand{\too}{\longrightarrow}
\newcommand{\dg}{\mathsf{dg}}
\newcommand{\cA}{{\mathcal A}}
\newcommand{\cB}{{\mathcal B}}
\newcommand{\cC}{{\mathcal C}}
\newcommand{\cD}{{\mathcal D}}
\newcommand{\cO}{{\mathcal O}}
\newcommand{\cT}{{\mathcal T}}
\newcommand{\cU}{{\mathcal U}}
\newcommand{\bbA}{\mathbb{A}}
\newcommand{\bbK}{I\mspace{-6.mu}K}
\newcommand{\bbZ}{\mathbb{Z}}
\newcommand{\ie}{\textsl{i.e.}\ }
\newcommand{\Hmo}{\mathsf{Hmo}}
\newcommand{\perf}{\mathsf{perf}} 
\newcommand{\rep}{\mathsf{rep}} 
\newcommand{\dgcat}{\mathsf{dgcat}}
\newcommand{\Spt}{\mathsf{Sp}}
\newcommand{\internalcomment}[1]{}
\title[The fundamental theorem via three simple properties]{The fundamental theorem via\\ derived Morita invariance, localization, \\and $\bbA^1$-homotopy invariance}
\author{Gon{\c c}alo~Tabuada}
\address{Gon\c calo Tabuada, Departamento de Matematica, FCT-UNL, Quinta da Torre, 2829-516 Caparica,~Portugal }
\email{tabuada@fct.unl.pt}
\subjclass[2000]{18D20, 18D55, 19D50, 19D55}
\date{\today}
\keywords{Fundamental theorem, $\bbA^1$-homotopy invariance, Dg categories, Homotopy algebraic $K$-theory, Periodic cyclic homology.}
\thanks{Research partially supported by the Clay Mathematics Institute and by the FCT-Portugal grant {\tt PTDC/MAT/098317/2008}.}
\begin{document}
\begin{abstract}
We prove that every functor defined on dg categories, which is derived Morita invariant, localizing, and $\bbA^1$-homotopy invariant, satisfies the fundamental theorem. As an application, we recover in a unified and conceptual way, Weibel and Kassel's fundamental theorems in homotopy algebraic $K$-theory, and periodic cyclic homology, respectively.
\end{abstract}
\maketitle

\vskip-\baselineskip
\vskip-\baselineskip

\section{Statement of results}
A {\em differential graded (=dg) category}, over a fixed commutative base ring $k$, is a category enriched over 
cochain complexes of $k$-modules (morphisms sets are such complexes) in such a way that composition fulfills the Leibniz rule\,: $d(f\circ g)=(df)\circ g+(-1)^{\textrm{deg}(f)}f\circ(dg)$.
Dg categories enhance and solve many of the technical problems inherent to triangulated categories; see Keller's ICM address~\cite{ICM}. In non-commutative algebraic geometry in the sense of Bondal, Drinfeld, Kaledin,  Kontsevich, Van den Bergh~\cite{BB,Drinfeld,Chitalk,Kaledin,ENS,IAS,finMot} they are considered as dg-enhancements of (bounded) derived categories of quasi-coherent sheaves on a hypothetic non-commutative space. 

Given a dg algebra $A$, we will denote by $\underline{A}$ the associated dg category with one object and dg algebra of endomorphisms $A$. Let $E: \dgcat \to \cT$ be a functor, defined on the category of dg categories, and with values in an arbitrary triangulated category~\cite{Neeman}. The functor $E$ is called\,:
\begin{itemize}
\item[(i)] {\em Derived Morita invariant} if it inverts the dg functors $\cA \to \cB$ which induce an equivalence $\cD(\cA) \stackrel{\sim}{\to}\cD(\cB)$ between the associated derived categories; see \cite[\S3]{ICM}.
\item[(ii)] {\em Localizing} if it sends exact sequences of dg categories (see \cite[\S4.6]{ICM})  
to distinguished triangles
\begin{eqnarray*}
0 \to \cA \to \cB \to \cC \to 0 &\mapsto&
E(\cA) \to E(\cB) \to E(\cC) \to \Sigma(E(\cA))\,.
\end{eqnarray*}

\item[(iii)] {\em $\bbA^1$-homotopy invariant} if it inverts the dg functors $\cA \to \cA[t]$, where $\cA[t]$ denotes the tensor product of $\cA$ with $\underline{k[t]}$; see \cite[\S2.3]{ICM}.
\end{itemize}
Examples of functors satisfying conditions (i)-(iii) include homotopy algebraic $K$-theory $(KH)$ and periodic cyclic homology $(HP)$; see \S\ref{sec:KH}-\ref{sec:HP}.

Now, given any dg category $\cA$, let $\cA[t,t^{-1}]$ be the tensor product of $\cA$ with $\underline{k[t,t^{-1}]}$, where $k[t,t^{-1}]$ is the $k$-algebra of Laurent polynomials. We say that the functor $E$ {\em satisfies the fundamental theorem} if
\begin{equation*}
E(\cA[t,t^{-1}]) \simeq E(\cA) \oplus \Sigma(E(\cA))\,.
\end{equation*}
Our main result is the following\,:
\begin{theorem}\label{thm:fundamental}
Assume that $k$ is a field of characteristic zero. Then, every functor $E: \dgcat \to \cT$ satisfying conditions {\em (i)-(iii)} satisfies also the fundamental theorem.
\end{theorem}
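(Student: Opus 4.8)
The plan is to play off two localization sequences against $\bbA^1$-homotopy invariance: the one attached to the open immersion $\mathbb{G}_m := \Spec k[t,t^{-1}]\hookrightarrow\bbA^1 := \Spec k[t]$, and the one attached to the standard cover $\bbP^1 = \bbA^1\cup\bbA^1$, the latter repackaged as a Mayer--Vietoris triangle. I work throughout with the dg categories $\perf_{\dg}$ of perfect complexes on these $k$-schemes, base-changed along $-\otimes_k\cA$; here the hypothesis that $k$ is a field is used to ensure $\cA$ is $k$-flat, so these base changes are homotopically meaningful and are derived Morita equivalent to the expected dg algebras --- e.g.\ $\perf_{\dg}(\bbA^1)\otimes_k\cA$ to $\cA[t]$ and $\perf_{\dg}(\mathbb{G}_m)\otimes_k\cA$ to $\cA[t,t^{-1}]$ --- so that $E$, being derived Morita invariant, does not distinguish them.

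First I would analyze $\bbP^1_\cA := \perf_{\dg}(\bbP^1)\otimes_k\cA$. Beilinson's full exceptional collection $\{\mathcal{O},\mathcal{O}(1)\}$ exhibits it (after base change, which preserves semiorthogonal decompositions over a field) as an ``upper-triangular'' gluing of two copies of $\cA$ along a bimodule; the corresponding exact sequence of dg categories splits (the second copy includes back), so applying $E$ yields $E(\bbP^1_\cA)\simeq E(\cA)\oplus E(\cA)$, the two summands being the images of $\mathcal{O}\otimes(-)$ and $\mathcal{O}(1)\otimes(-)$. Next, for the cover $\bbP^1 = U_0\cup U_\infty$ with $U_0 = \bbA^1$, $U_\infty\cong\bbA^1$, and $U_0\cap U_\infty = \mathbb{G}_m$, I would apply the localization axiom to the pairs $(\bbP^1,U_0)$ and $(U_\infty,\mathbb{G}_m)$; their ``supported'' terms coincide by Thomason--Trobaugh excision (up to idempotent completion, hence by derived Morita invariance of $E$), compatibly with the maps to $E(\bbP^1_\cA)$ and to $E(U_{\infty,\cA})$. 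Standard triangulated yoga then produces the Mayer--Vietoris triangle
\[
E(\bbP^1_\cA)\xrightarrow{\ (\rho_0,\rho_\infty)\ } E(\cA[t])\oplus E(\cA[t^{-1}])\longrightarrow E(\cA[t,t^{-1}])\longrightarrow\Sigma E(\bbP^1_\cA),
\]
so that $E(\cA[t,t^{-1}])$ is the cone of $(\rho_0,\rho_\infty)$ (up to a sign on one component, irrelevant below).

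It remains to identify this map. By $\bbA^1$-homotopy invariance the structure functors induce isomorphisms $E(\cA)\xrightarrow{\sim}E(\cA[t])$ and $E(\cA)\xrightarrow{\sim}E(\cA[t^{-1}])$. Since both $\mathcal{O}$ and $\mathcal{O}(1)$ restrict to the trivial line bundle on each chart $U_0\cong U_\infty\cong\bbA^1$, each of $\mathcal{O}\otimes(-)$, $\mathcal{O}(1)\otimes(-)$ composed with restriction to a chart is isomorphic to the corresponding structure functor; under the identifications above $\rho_0$ and $\rho_\infty$ therefore both become the sum map $E(\cA)^{\oplus 2}\to E(\cA)$, $(a,b)\mapsto a+b$. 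Thus $(\rho_0,\rho_\infty)\colon E(\cA)^{\oplus2}\to E(\cA)^{\oplus2}$ is, up to sign, the ``all-ones'' matrix $\left(\begin{smallmatrix}\id&\id\\\id&\id\end{smallmatrix}\right)$; invertible row and column operations over $\cT$ turn it into $\mathrm{diag}(\id,0)$ without changing its cone up to isomorphism, and the cone of $\mathrm{diag}(\id_{E(\cA)},0_{E(\cA)})$ is $0\oplus\big(E(\cA)\oplus\Sigma E(\cA)\big)$. Hence $E(\cA[t,t^{-1}])\simeq E(\cA)\oplus\Sigma E(\cA)$, as asserted.

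The main obstacle I foresee is the second paragraph: extracting a genuine Mayer--Vietoris triangle for the abstract functor $E$ out of the bare localization axiom. This needs (a) honest small dg models of $\perf(\bbP^1)$ and its charts, with their $\otimes_k\cA$'s identified with $\cA[t]$, $\cA[t^{-1}]$, $\cA[t,t^{-1}]$ up to derived Morita equivalence; (b) the excision equivalence $\perf_{\dg,\{\infty\}}(\bbP^1)\simeq\perf_{\dg,\{\infty\}}(U_\infty)$, valid only after idempotent completion --- precisely where derived Morita invariance is indispensable; and (c) a compatible gluing of the two localization triangles via the octahedral axiom. An alternative, scheme-free route stays inside $\dgcat$: the localization sequence of $\mathbb{G}_m\hookrightarrow\bbA^1$ reads $0\to\cB\to\cA[t]\to\cA[t,t^{-1}]\to0$ with $\cB = \perf_{\dg,\{0\}}(\bbA^1)\otimes_k\cA$; the first map is zero on $E$ (compose with evaluation at $t=a$ for $a\in k^\times$, an isomorphism on $E$ by $\bbA^1$-invariance, which annihilates complexes supported at the origin), so the triangle splits and one is reduced to proving $E(\cB)\simeq E(\cA)$ --- and since $\cB$ is derived Morita equivalent to $\cA$ tensored with the Koszul-dual exterior dg algebra of $(k[t],(t))$, it is plausibly here, in identifying $E$ of that dg algebra with $E(k)$, that the characteristic-zero hypothesis is genuinely needed.
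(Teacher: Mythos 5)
Your primary route is genuinely different from the paper's, and --- modulo the Mayer--Vietoris gap you yourself flag --- it is a viable one: it is the classical ``Beilinson decomposition of $\bbP^1$ plus Zariski Mayer--Vietoris'' derivation of the fundamental theorem, transported to an abstract derived Morita invariant, localizing, $\bbA^1$-invariant functor, and your linear algebra with the all-ones matrix is correct. The paper never leaves the affine line: it takes Keller's localization sequence $0\to\cD(k[\xi]/\xi^2)\xrightarrow{-\otimes X}\cD(k[t])\to\cD(k[t,t^{-1}])\to 0$, where $k[\xi]/\xi^2$ is the graded exterior algebra (your Koszul dual) and $X$ restricts over $k[t]$ to ${\bf P}=(k[t]\xrightarrow{\cdot t}k[t])$, tensors it with $\cA$ via Drinfeld's theorem, and proves two lemmas: first, $E(\id\otimes X)=0$, because ${\bf P}$ is a cone of $\cdot t$ and hence has trivial class in $K_0\rep(\underline{k},\underline{k[t]})$, i.e.\ becomes zero in the universal additive category $\Hmo_0$ through which $E$ factors; second, for any non-negatively graded $k$-algebra $A$, $E(\cA\otimes\underline{A_0})\xrightarrow{\sim}E(\cA\otimes\underline{A})$, proved with the homotopy $a_n\mapsto a_n\otimes t^n$ and $\bbA^1$-invariance, which simultaneously identifies $E(\cA[t])$ and $E(\cA\otimes\underline{k[\xi]/\xi^2})$ with $E(\cA)$. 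This is exactly the ``alternative, scheme-free route'' of your last paragraph, and your evaluation-at-$t=a$ argument for the vanishing of the first map is a correct and arguably slicker substitute for the $K_0$-class argument (the composite bimodule is ${\bf P}\otimes_{k[t]}k[t]/(t-a)\simeq(k\xrightarrow{\cdot a}k)\simeq 0$). What the paper's route buys is economy: one localization triangle, no excision, no octahedra, just the splitting criterion for a triangle with one vanishing map. What your $\bbP^1$ route buys is geometric transparency, at the price of items (a)--(c) of your last paragraph, each standard but each needing real care. One correction: characteristic zero is \emph{not} what identifies $E$ of the exterior algebra with $E(k)$ --- that is the graded-algebra lemma just quoted, which uses only $\bbA^1$-invariance; in the paper char~$0$ enters only in verifying that periodic cyclic homology satisfies condition (iii), and the field hypothesis only through flatness of $-\otimes\cA$, exactly as you use it.
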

By taking $E=KH$, resp. $E=HP$, in Theorem~\ref{thm:fundamental} we obtain the fundamental theorem in homotopy algebraic $K$-theory, resp. in periodic cyclic homology\,:
\begin{eqnarray}
KH(\cA[t,t^{-1}]) &\simeq& KH(\cA) \oplus \Sigma(KH(\cA))  \label{eq:KH} \\
HP(\cA[t,t^{-1}]) &\simeq& HP(\cA) \oplus \Sigma(HP(\cA))   \label{eq:HP}\,.
\end{eqnarray}
Given a quasi-compact and quasi-separated $k$-scheme $X$, it is well-known that the derived category $\cD_\perf(X)$ of perfect complexes of $\cO_X$-modules admits a natural dg-enhancement $\cD_\perf^\dg(X)$; see Lunts-Orlov~\cite{LO} or \cite[Example~4.5]{CT1}. Moreover, the homotopy algebraic $K$-theory and the periodic cyclic homology of the scheme $X$ can be recovered from the dg category $\cD_\perf^\dg(X)$ by applying to it the corresponding functor; see Propositions~\ref{prop:agreementKH} and \ref{prop:agreementHP}. Therefore, when $\cA=\cD_\perf^\dg(X)$, the above isomorphisms \eqref{eq:KH}-\eqref{eq:HP} reduce to\,:
\begin{eqnarray}
KH(X[t,t^{-1}]) &\simeq& KH(X) \oplus \Sigma(KH(X))  \label{eq:KH-schemes} \\
HP(X[t,t^{-1}]) &\simeq& HP(X) \oplus \Sigma(HP(X))   \label{eq:HP-schemes}\,,
\end{eqnarray}
where $X[t,t^{-1}]$ denotes the product (over $\mathrm{Spec}(k)$) of $X$ with $\mathrm{Spec}(k[t,t^{-1}])$.

Isomorphism \eqref{eq:KH}, resp. \eqref{eq:HP}, was originally established by Weibel, resp. by Kassel, in the particular case where $\cA$ is an ordinary $k$-algebra; see \cite[Theorem~1.2]{Weibel-KH} and \cite[Corollary~3.12]{Kassel}. Weibel also established isomorphism \eqref{eq:KH-schemes}; see \cite[Theorem~6.11]{Weibel-KH}. The strategies to construct the aforementioned isomorphisms are not only very different but also quite involved. Weibel's proof makes use, in an essential way, of Bass-Quillen's fundamental theorem in algebraic $K$-theory while Kassel's proof makes use of the monoidal properties of periodic cyclic homology as well as of its relation with Grothendieck's crystalline cohomology~\cite{Grot}.

In contrast, we obtain a unified and conceptual proof that applies to isomorphisms \eqref{eq:KH}-\eqref{eq:HP-schemes} since each one of these can be obtained by a simple instantiation of the general Theorem~\ref{thm:fundamental}. Moreover, to the best of the author's knowledge, isomorphism \eqref{eq:HP-schemes} is new in the literature. Theorem~\ref{thm:fundamental} uncovers in a direct and elegant way the three key conceptual ingredients that underlie the fundamental theorem. The strategy of its proof (see \S\ref{sec:proof}) differs radically from the one used by Weibel or by Kassel. It is based on a careful study of an explicit exact sequence, making use of an additive category, of ``motivic nature''\footnote{See Kontsevich~\cite[\S4.1.3]{finMot} for its precise relation with pure and mixed motives.}, developed by the author in \cite{IMRN}. Due to its generality and simplicity, we believe that Theorem~\ref{thm:fundamental} will soon be part of the toolkit of any mathematician whose research comes across the above conditions (i)-(iii).

\section{Homotopy algebraic $K$-theory of dg categories}\label{sec:KH}
In this section we introduce the homotopy algebraic $K$-theory of dg categories. Recall from Schlichting~\cite[\S12.1]{Negative} and \cite[Notation~2.38]{CT} the construction of the non-connective algebraic $K$-theory spectrum functor
\begin{equation}\label{eq:non-con}
\bbK: \dgcat \too \Ho(\Spt)\,,
\end{equation}
with values in the homotopy category of spectra~\cite{BF}. This functor, although derived Morita invariant and localizing, it is not  $\bbA^1$-homotopy invariant. In order to impose this latter property, we consider the simplicial $k$-algebra
\begin{eqnarray*}
\Delta_{\bullet}:&& \Delta_n:= k[t_0, \ldots, t_n]/ \big(\sum t_i -1\big) \qquad n \geq 0\,,
\end{eqnarray*}
with faces and degenerancies given by the formulae
\begin{eqnarray*}
\partial_r(t_j) := \left\{ \begin{array}{lcr}
t_j & \text{if} & j <r \\
0 & \text{if} & j =r \\
t_{j-1} & \text{if} & j > r \\
\end{array} \right.
&
&
\delta_r(t_j) := \left\{ \begin{array}{lcr}
t_j & \text{if} & j <r \\
t_j + t_{j+1} & \text{if} & j =r \\
t_{j+1} & \text{if} & j > r \\
\end{array} \right.\,.
\end{eqnarray*}
\begin{definition}
The {\em homotopy algebraic $K$-theory spectrum functor} is defined as\,:
\begin{eqnarray}\label{def:KH}
KH: \dgcat \too \Ho(\Spt) && \cA \mapsto \mathrm{hocolim}_n\, \bbK(\cA \otimes \underline{\Delta_n})\,.
\end{eqnarray}
\end{definition}
\begin{proposition}\label{prop:KH-A1}
The functor \eqref{def:KH} is $\bbA^1$-homotopy invariant.
\end{proposition}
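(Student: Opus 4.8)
The plan is to observe that the $\bbA^1$-homotopy invariance of $KH$ is a formal consequence of the $\Delta_\bullet$-construction and uses nothing about $\bbK$ beyond its functoriality. Fix a dg category $\cA$, let $\iota\colon \cA \to \cA[t]$ be the dg functor induced by the unit $k\to k[t]$, and let $\epsilon\colon \cA[t]\to\cA$ be the one induced by the augmentation $k[t]\to k$, $t\mapsto 0$. Since $\epsilon\circ\iota=\id_\cA$, the morphism $KH(\iota)$ is a split monomorphism, so it suffices to prove that $KH(\iota\circ\epsilon)$ is the identity of $KH(\cA[t])$; note that $\iota\circ\epsilon$ is simply the dg functor ``$t\mapsto 0$'' on $\cA[t]=\cA\otimes\underline{k[t]}$.

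Next I would connect $\id_{\cA[t]}$ and $\iota\circ\epsilon$ by an \emph{elementary} $\bbA^1$-homotopy: the dg functor $H\colon \cA[t]\to \cA[t]\otimes\underline{k[s]}$ induced by $t\mapsto t\otimes s$ restricts, along $s\mapsto 1$ and along $s\mapsto 0$, to $\id_{\cA[t]}$ and to $\iota\circ\epsilon$, respectively. The main step is then to upgrade this elementary homotopy into a \emph{simplicial} homotopy of the simplicial spectrum $[n]\mapsto \bbK(\cA[t]\otimes\underline{\Delta_n})$, whose $\hocolim$ over $\Delta^\op$ computes $KH(\cA[t])$. Concretely: tensoring $H$ with $\underline{\Delta_n}$ and using the ring isomorphism $k[s]\cong\Delta_1$ (say $s\mapsto t_1$, so that the two vertices match up) one obtains, for each $n$, a dg functor $\cA[t]\otimes\underline{\Delta_n}\to\cA[t]\otimes\underline{\Delta_n\otimes_k\Delta_1}$; composing with the $n+1$ Eilenberg--Zilber shuffle maps $\underline{\Delta_n\otimes_k\Delta_1}\to\underline{\Delta_{n+1}}$ dual to the standard triangulation of the prism $\Delta^1\times\Delta^n$ yields dg functors $h_i\colon \cA[t]\otimes\underline{\Delta_n}\to\cA[t]\otimes\underline{\Delta_{n+1}}$, $0\le i\le n$. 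Applying $\bbK$, the family $\{\bbK(h_i)\}_i$ satisfies the simplicial homotopy identities and exhibits $\bbK\big((\id_{\cA[t]})\otimes\underline{\Delta_\bullet}\big)$ and $\bbK\big((t\mapsto 0)\otimes\underline{\Delta_\bullet}\big)$ as simplicially homotopic self-maps of $[n]\mapsto\bbK(\cA[t]\otimes\underline{\Delta_n})$. By the standard fact that simplicially homotopic maps of simplicial spectra agree after passing to the realization ($\simeq$ the $\hocolim$ over $\Delta^\op$), we conclude $KH(\iota\circ\epsilon)=\id_{KH(\cA[t])}$, hence that $KH(\iota)$ is an isomorphism.

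The only non-formal ingredient is the combinatorial verification that the maps $h_i$ built from the shuffle maps obey the simplicial homotopy relations --- in particular that the two extreme ones recover $\id$ and ``$t\mapsto 0$'' after the appropriate face maps. This is the classical Eilenberg--Zilber bookkeeping for the prism $\Delta^1\times\Delta^n$ and, in this $K$-theoretic guise, already underlies Weibel's construction of $KH$ \cite{Weibel-KH}; I would either spell it out or simply refer to that source. I expect this to be the only genuine (though routine) obstacle: everything else is unwinding the definition of $KH$, the splitting argument, and the passage from simplicial homotopies to equalities of realizations.
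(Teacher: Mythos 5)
Your argument is correct and coincides with the paper's own proof: both use the retraction $\mathrm{ev}_0\circ\iota=\id$ to reduce to showing that $\iota\circ\mathrm{ev}_0$ induces the identity on $KH(\cA[t])$, and both obtain this from a simplicial homotopy on $[n]\mapsto\bbK(\cA[t]\otimes\underline{\Delta_n})$ arising from the polynomial homotopy $t\mapsto ts$. The only difference is that the paper outsources the prism/shuffle bookkeeping you sketch to \cite[Proposition~2.4]{Weibel-KV} rather than constructing the simplicial homotopy by hand.
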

\begin{proof}
Note first that we have natural morphisms of $k$-algebras $\iota: k \to k[t]$ and $\mathrm{ev}_0, \mathrm{ev}_1: k[t] \to k$ satisfying the relations $\mathrm{ev}_0 \circ \iota =\mathrm{ev}_1 \circ \iota=\id$. Given a dg category $\cA$, the dg functor $\cA \to \cA[t]$ of condition (iii) corresponds then to 
$$\cA \stackrel{\sim}{\too } \cA \otimes \underline{k} \stackrel{\id \otimes \iota}{\too} \cA\otimes \underline{k[t]}\,.$$
Therefore, in order to prove that the functor \eqref{def:KH} is $\bbA^1$-homotopy invariant we need to show that the induced morphism of spectra
$$ \mathrm{hocolim}_n\, \bbK(\cA \otimes \underline{\Delta_n}) \simeq \mathrm{hocolim}_n\, \bbK(\cA \otimes \underline{k} \otimes \underline{\Delta_n}) \too \mathrm{hocolim}_n\, \bbK(\cA \otimes \underline{k[t]}\otimes \underline{\Delta_n})$$
is an isomorphism in $\Ho(\Spt)$. This follows from the equality $\mathrm{ev}_0\circ \iota = \id$ and from the existence of a simplicial homotopy (see \cite[Proposition~2.4]{Weibel-KV}) between the identity and the simplicial map of $k$-algebras
$$ (k[t] \stackrel{\mathrm{ev}_0}{\too} k \stackrel{\iota}{\too} k[t]) \otimes \Delta_{\bullet}\,.$$
\end{proof}
The fact that the functor \eqref{def:KH} is derived Morita invariant and localizing follows from its own definition and from the corresponding properties of the functor \eqref{eq:non-con}.
\begin{proposition}\label{prop:agreementKH}
Let $\cA$ be a dg category.
\begin{itemize}
\item[(a)] When $\cA$ is derived Morita equivalent to $\underline{A}$, with $A$ an ordinary $k$-algebra, then $KH(\cA)$ is isomorphic to the homotopy algebraic $K$-theory spectrum of $A$ as defined by Weibel in \cite[Definition~1.1]{Weibel-KH}. 
\item[(b)]  When $\cA$ is derived Morita equivalent to $\cD_\perf^{\dg}(X)$, with $X$ a quasi-compact and quasi-separated $k$-scheme, then $KH(\cA)$ is isomorphic to the homotopy algebraic $K$-theory spectrum of $X$ as defined by Weibel in \cite[Definition~6.5]{Weibel-KH}.
\end{itemize}
\end{proposition}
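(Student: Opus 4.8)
The plan is to exploit the derived Morita invariance of the functor \eqref{def:KH}, already recorded above, in order to reduce each assertion to an explicit computation of the homotopy colimit, which I would then match with Weibel's definitions level by level along the cosimplicial $k$-algebra $\Delta_\bullet$.

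For part (a), derived Morita invariance first gives $KH(\cA)\simeq KH(\underline{A})$, so it suffices to compute $KH(\underline{A})=\mathrm{hocolim}_n\,\bbK(\underline{A}\otimes\underline{\Delta_n})$. I would then invoke two elementary identifications, both natural in $n$. First, since $A$ and $\Delta_n$ are ordinary $k$-algebras, the tensor product of the associated one-object dg categories satisfies $\underline{A}\otimes\underline{\Delta_n}\simeq\underline{A\otimes_k\Delta_n}$. Second, Schlichting's non-connective $K$-theory spectrum $\bbK(\underline{B})$ of the one-object dg category attached to an ordinary $k$-algebra $B$ agrees with the classical non-connective algebraic $K$-theory spectrum of the ring $B$; see Schlichting~\cite{Negative} and \cite{CT}. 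Feeding these into the homotopy colimit yields $KH(\underline{A})\simeq\mathrm{hocolim}_n\,\mathbb{K}(A\otimes_k\Delta_n)$, which is exactly \cite[Definition~1.1]{Weibel-KH}.

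For part (b), derived Morita invariance again reduces the claim to identifying $\mathrm{hocolim}_n\,\bbK(\cD_\perf^\dg(X)\otimes\underline{\Delta_n})$ with \cite[Definition~6.5]{Weibel-KH}. Here I would use two global inputs. First, a base-change equivalence: since $\Delta_n$ is flat (indeed free) over $k$, there is a natural derived Morita equivalence $\cD_\perf^\dg(X)\otimes\underline{\Delta_n}\simeq\cD_\perf^\dg\big(X\times_k\Spec(\Delta_n)\big)$, expressing the compatibility of the dg enhancement of perfect complexes with products of schemes. Second, the agreement $\bbK(\cD_\perf^\dg(Y))\simeq\mathbb{K}(Y)$ between Schlichting's $K$-theory of the dg category of perfect complexes and the non-connective $K$-theory of a quasi-compact and quasi-separated scheme $Y$ (Thomason--Trobaugh, Schlichting~\cite{Negative}). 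Applying both with $Y=X\times_k\Spec(\Delta_n)$, naturally in $n$, produces $KH(\cD_\perf^\dg(X))\simeq\mathrm{hocolim}_n\,\mathbb{K}\big(X\times_k\Spec(\Delta_n)\big)$, which is Weibel's definition.

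The hard part will be checking that these identifications are genuinely natural along $\Delta_\bullet$, so that they assemble into a levelwise weak equivalence of simplicial spectra and therefore pass through the homotopy colimit. In part (b) the most delicate point is the base-change equivalence $\cD_\perf^\dg(X)\otimes\underline{\Delta_n}\simeq\cD_\perf^\dg(X\times_k\Spec(\Delta_n))$ for a non-affine quasi-compact and quasi-separated $X$, which I expect to require the flatness of $\Delta_n$ over $k$ together with the standard base-change and descent properties of the dg category of perfect complexes.
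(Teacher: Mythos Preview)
Your argument follows the paper's proof essentially step for step: derived Morita invariance to reduce to $\underline{A}$ or $\cD_\perf^\dg(X)$, the identification $\underline{A}\otimes\underline{\Delta_n}\simeq\underline{A\otimes_k\Delta_n}$ together with the agreement of $\bbK(\underline{B})$ with classical non-connective $K$-theory for part~(a), and the base-change equivalence $\cD_\perf^\dg(X)\otimes\underline{\Delta_n}\simeq\cD_\perf^\dg(X\times_k\Spec(\Delta_n))$ followed by the agreement $\bbK(\cD_\perf^\dg(Y))\simeq\bbK(Y)$ for part~(b).

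There is, however, one genuine step you skip at the very end of~(b). You arrive at $\mathrm{hocolim}_n\,\bbK\big(X\times_k\Spec(\Delta_n)\big)$ and assert that this ``is Weibel's definition''. It is not: Weibel's \cite[Definition~6.5]{Weibel-KH} for a general (non-affine) quasi-compact and quasi-separated scheme is formulated via a \v{C}ech cohomological descent construction, not directly as this simplicial homotopy colimit. The paper closes this gap by invoking Thomason--Trobaugh~\cite[\S9.11]{Thomason}, which supplies the required comparison between the two models under the hypothesis that $X$ is quasi-compact and quasi-separated. Your ``hard part'' paragraph flags naturality and base-change as the delicate issues, but the actual missing ingredient is this final \v{C}ech-descent comparison; without it the identification with Weibel's scheme-theoretic $KH$ is not complete.
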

\begin{proof}
Item (a) follows immediately from Weibel's definition~\cite[Definition~1.1]{Weibel-KH}, from the natural identification $ A \otimes \Delta_n \simeq \Delta_nA$ where $\Delta_nA$ is the coordinate ring $A[t_0, \dots, t_n] / \big(\sum t_i -1\big)A$ of ``standard $n$-simplexes'' over $A$, and from the fact that for every $k$-algebra $B$ the spectrum $\bbK(\underline{B})$ agrees with the non-connective algebraic $K$-theory spectrum of $B$. In what concerns item (b), we have the following equivalences
\begin{equation*}
\cD_\perf^\dg(X) \otimes \underline{\Delta_n} \simeq \cD_\perf^\dg(X \times \mathrm{Spec}(\Delta_n)) \qquad n \geq 0\,.
\end{equation*}
Therefore, in this particular case where $\cA=\cD_\perf^{\dg}(X)$, Definition~\eqref{def:KH} reduces to
\begin{equation}\label{eq:lasteq}
\mathrm{hocolim}_n\, \bbK(\cD_\perf^\dg(X \times \mathrm{Spec}(\Delta_n)))\,.
\end{equation}
The (non-connective) algebraic $K$-theory spectrum of the scheme $X \times \mathrm{Spec}(\Delta_n)$ (see \cite[\S6.5 and \S12.1]{Negative}) agrees with the (non-connective) algebraic $K$-theory spectrum of the dg category $\cD_\perf^{\dg}(X \times \mathrm{Spec}(\Delta_n))$; see \cite[Theorem~5.1]{ICM} and \cite[\S8 Theorem~5]{Negative}. Hence, the above homotopy colimit \eqref{eq:lasteq} can be identified with
\begin{equation*}
\mathrm{hocolim}_n\, \bbK(X \times \mathrm{Spec}(\Delta_n))\,.
\end{equation*}
Finally, since $X$ is quasi-compact and quasi-separated, we conclude that this latter spectrum is equivalent to the one defined by Weibel in \cite[Definition~6.5]{Weibel-KH} using \v{C}ech's cohomology descent; see Thomason-Trobaugh~\cite[\S9.11]{Thomason}.
\end{proof}
\section{Periodic cyclic homology of dg categories}\label{sec:HP}
In this section we recall the periodic cyclic homology of dg categories. Following Kassel~\cite[\S1]{Kassel}, a {\em mixed complex} $(M,b,B)$ is a $\bbZ$-graded $k$-module $(M_n)_{n \in \bbZ}$ endowed with a degree $+1$ endomorphism $b$ and a degree $-1$ endomorphism $B$ satisfying the relations $b^2=B^2=Bb +bB=0$. Equivalently, a mixed complex is a right dg module over the dg algebra $\Lambda:=k[\epsilon]/\epsilon^2$, where $\epsilon$ is of degree $-1$ and $d(\epsilon)=0$. Recall from \cite[Example~7.10]{CT1} the construction of the mixed complex functor
\begin{equation}\label{eq:mixed}
C: \dgcat \too \cD(\Lambda)\,,
\end{equation}
with values in the derived category of $\Lambda$. As explained by Kassel in~\cite[page~210]{Kassel} there is a well-defined functor sending a mixed complex $(M,b,B)$ to the $\bbZ/2$-graded cochain complex
$$
\prod_{\mathrm{n\,even}} \xymatrix{M_n \ar@<1ex>[r]^-{b+B} & \ar@<1ex>[l]^-{b+B}}  \prod_{\mathrm{n\,odd}} M_n \,.
$$
This functor preserves weak equivalences and so combined with \eqref{eq:mixed} gives rise to a composed functor
\begin{equation}\label{eq:HP1}
HP: \dgcat \stackrel{C}{\too} \cD(\Lambda) \too \cD_{\bbZ/2}(k)
\end{equation}
with values in the derived category of $\bbZ/2$-graded cochain complexes of $k$-modules, \ie the derived category of those $\bbZ$-graded complexes which are invariant under the $2$-fold shift functor. 
\newpage
\begin{proposition}
When $k$ is a field of characteristic zero, the functor \eqref{eq:HP1} is $\bbA^1$-homotopy invariant.
\end{proposition}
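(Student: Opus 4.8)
The plan is to exploit the factorization of the functor \eqref{eq:HP1} through the mixed complex functor $C\colon\dgcat\to\cD(\Lambda)$ of \eqref{eq:mixed}, together with the fact that $C$ is \emph{symmetric monoidal}: the (cyclic) shuffle products provide, for all dg categories $\cA,\cB$, a natural isomorphism $C(\cA)\otimes C(\cB)\simeq C(\cA\otimes\cB)$ in $\cD(\Lambda)$, where the tensor product of $\Lambda$-modules is formed over $k$ with the diagonal action coming from the (graded) Hopf algebra structure of $\Lambda=k[\epsilon]/\epsilon^2$ — since $k$ is a field, no derived correction is needed. (For algebras this is classical, see Kassel~\cite{Kassel}; for dg categories it follows from the construction in \cite{CT1}.) As recalled in the proof of Proposition~\ref{prop:KH-A1}, the dg functor of condition (iii) is $\cA\xrightarrow{\sim}\cA\otimes\underline{k}\xrightarrow{\id\otimes\iota}\cA\otimes\underline{k[t]}$, so under $C$ it becomes $C(\cA)\simeq C(\cA)\otimes C(\underline{k})\xrightarrow{\id\otimes C(\iota)}C(\cA)\otimes C(\underline{k[t]})\simeq C(\cA[t])$. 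It therefore suffices to prove that this map is inverted by the periodicization functor $\cD(\Lambda)\to\cD_{\bbZ/2}(k)$; I will prove the stronger statement that the cofiber $N$ of $C(\iota)\colon C(\underline{k})\to C(\underline{k[t]})$ in $\cD(\Lambda)$ is a \emph{free} $\Lambda$-module, and that the periodicization functor annihilates any object tensored with a free $\Lambda$-module.

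First I would identify $N$. Since $k[t]$ is a smooth $k$-algebra, the Hochschild--Kostant--Rosenberg theorem replaces $C(\underline{k[t]})$ by the quasi-isomorphic mixed complex $\big(\Omega^\bullet_{k[t]/k},\,b=0,\,B=d_{\mathrm{dR}}\big)$ concentrated in degrees $0$ and $1$, with $C(\underline{k})$ the trivial mixed complex $k$ in degree $0$ and $C(\iota)$ the inclusion of constants $k\hookrightarrow k[t]=\Omega^0_{k[t]/k}$. Passing to the quotient, $N\simeq\big(\Omega^\bullet_{k[t]/k}/k\big)$, i.e. the mixed complex with $k[t]/k\cong t\,k[t]$ in degree $0$, $\Omega^1_{k[t]/k}$ in degree $1$, $b=0$, and $B=d_{\mathrm{dR}}$. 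Here the characteristic-zero hypothesis is used essentially: the map $d_{\mathrm{dR}}\colon t\,k[t]\to\Omega^1_{k[t]/k}=k[t]\,dt$, $t^{n}\mapsto n\,t^{n-1}\,dt$, is bijective precisely because $\bbQ\subseteq k$. Consequently, as a $\Lambda$-module with $\epsilon$ acting by $B$, $N$ is isomorphic to the free module $\Lambda\otimes_k t\,k[t]$.

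Next I would apply the exact functor $-\otimes C(\cA)$ to the cofiber sequence $C(\underline{k})\to C(\underline{k[t]})\to N$, obtaining a cofiber sequence $C(\cA)\to C(\cA[t])\to C(\cA)\otimes N$ that realizes the map under study. Now $C(\cA)\otimes N$ is again free: the Hopf-algebra ``untwisting'' isomorphism $M\otimes_k\Lambda\cong M^{\mathrm{triv}}\otimes_k\Lambda$ (valid because $\Lambda$ has an antipode) gives $C(\cA)\otimes N\cong C(\cA)\otimes_k\Lambda\otimes_k t\,k[t]\cong\Lambda\otimes_k\big(C(\cA)\otimes_k t\,k[t]\big)$. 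Finally, the periodicization functor kills every free $\Lambda$-module $\Lambda\otimes_k^{\bbL}V$, $V\in\cD(k)$ — the cyclic-homology incarnation of the vanishing of the Tate construction on free $S^1$-objects, which can also be seen by an explicit contracting homotopy on $\prod_n(\Lambda\otimes V)_n$ built from ``$\epsilon^{-1}$''. Hence $HP(C(\cA)\otimes N)=0$ and $HP(\cA)\to HP(\cA[t])$ is an isomorphism.

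I expect the main obstacle to be organizational rather than conceptual: one must set up carefully the symmetric monoidality of the mixed complex functor for dg categories, the compatibility of the Connes operator with $d_{\mathrm{dR}}$ under HKR, and the verification that the displayed composite is indeed the map induced by $\cA\to\cA[t]$. Once these are in place, the only genuinely characteristic-zero ingredient is the bijectivity of $d_{\mathrm{dR}}$ on $t\,k[t]$, after which everything is formal manipulation with free $\Lambda$-modules.
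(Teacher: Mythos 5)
Your proof is correct, but it follows a genuinely different route from the paper's. The paper disposes of the statement in two sentences by quoting Kassel: since the $k$-algebras $k$ and $k[t]$ satisfy his property $(P)$, the K\"unneth theorem \cite[Theorem~3.10]{Kassel} gives $HP(\cA\otimes\underline{k})\simeq HP(\cA)\otimes HP(\underline{k})$ and $HP(\cA\otimes\underline{k[t]})\simeq HP(\cA)\otimes HP(\underline{k[t]})$, reducing everything to the single fact that $HP(\underline{k})\to HP(\underline{k[t]})$ is invertible in characteristic zero, which is \cite[Corollary~3.12 and (3.13)]{Kassel}. You instead keep the K\"unneth isomorphism at the level of $\cD(\Lambda)$ (where, over a field, it holds for all dg categories with no property-$(P)$ hypothesis), identify the cofibre of $C(\underline{k})\to C(\underline{k[t]})$ via Hochschild--Kostant--Rosenberg as the free $\Lambda$-module $\Lambda\otimes_k t\,k[t]$ --- the bijectivity of $d_{\mathrm{dR}}$ on $t\,k[t]$ being exactly where characteristic zero enters --- and then kill it, and its tensor product with $C(\cA)$ via the Hopf-algebra untwisting, under periodicization by the standard contracting homotopy ``$\epsilon^{-1}$''. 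Your route buys a self-contained argument that localizes the characteristic-zero hypothesis in one visible place, and, by invoking monoidality only \emph{before} periodicization, it sidesteps property $(P)$, which Kassel needs precisely because the product totalization defining $HP$ does not commute with $\otimes$. The cost is the bookkeeping you yourself flag: the shuffle map is not a strict morphism of mixed complexes (one needs the cyclic-shuffle correction of \cite[\S2]{Kassel} or the monoidal structure of \cite{CT1} to get the isomorphism in $\cD(\Lambda)$), and the HKR map must be checked to intertwine Connes' $B$ with $d_{\mathrm{dR}}$. These are standard but must be done; the paper outsources all of them to \cite{Kassel}.
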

\begin{proof}
Kassel's property $(P)$ (see \cite[page~211]{Kassel}) is verified by the $k$-algebras $k$ and $k[t]$. Therefore, \cite[Theorem~3.10]{Kassel} furnish us the isomorphisms\,:
\begin{eqnarray*}
HP(\cA \otimes \underline{k}) \simeq HP(\cA) \otimes HP(\underline{k}) && HP(\cA \otimes \underline{k[t]}) \simeq HP(\cA) \otimes HP(\underline{k[t]})\,.
\end{eqnarray*}
This shows us that the functor \eqref{eq:HP1} is $\bbA^1$-homotopy invariant if and only if it inverts the dg functor $\underline{k} \to \underline{k[t]}$. Since $k$ is a field of characteristic zero, Kassel's homotopy invariance results (see \cite[Corollary 3.12 and (3.13)]{Kassel}) allow us to conclude that \eqref{eq:HP1} inverts the dg functor $\underline{k} \to \underline{k[t]}$ and so the proof is finished.
\end{proof}
The fact that \eqref{eq:HP1} is derived Morita invariant and localizing follows from the corresponding properties of the functor \eqref{eq:mixed}.
\begin{proposition}{(Keller~\cite[Theorem~5.2]{ICM})}\label{prop:agreementHP}
Let $\cA$ be a dg category.
\begin{itemize}
\item[(a)] When $\cA$ is derived Morita equivalent to $\underline{A}$, with $A$ an ordinary $k$-algebra, then $HP(\cA)$ is isomorphic to the periodic cyclic homology of $A$.
\item[(b)]  When $\cA$ is derived Morita equivalent to $\cD_\perf^{\dg}(X)$, with $X$ a quasi-compact and quasi-separated $k$-scheme, then $HP(\cA)$ is isomorphic to the periodic cyclic homology of $X$.
\end{itemize}
\end{proposition}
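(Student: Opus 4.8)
The plan is to treat both parts by reducing, via the derived Morita invariance of the mixed complex functor $C$ of \eqref{eq:mixed} (and hence of $HP$, as noted above), to an identification of $C$ on the standard building blocks. Since the passage from $C(\cA)\in\cD(\Lambda)$ to $HP(\cA)\in\cD_{\bbZ/2}(k)$ is functorial and sends quasi-isomorphisms to isomorphisms, it suffices in each case to identify $C(\cA)$, up to isomorphism in $\cD(\Lambda)$, with the mixed complex whose $\bbZ/2$-graded periodicization computes the classical periodic cyclic homology. This is Keller's Theorem~5.2 of \cite{ICM}, and the argument below follows his.

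For part (a), I would first invoke derived Morita invariance to obtain $C(\cA)\simeq C(\underline{A})$ in $\cD(\Lambda)$, whence $HP(\cA)\simeq HP(\underline{A})$. It then remains to unwind the construction of $C$ from \cite[Example~7.10]{CT1} on the one-object dg category $\underline{A}$: its value is precisely the standard mixed complex $(C(A),b,B)$ of the ordinary $k$-algebra $A$, with Hochschild differential $b$ and Connes' operator $B$. Applying the functor $\cD(\Lambda)\to\cD_{\bbZ/2}(k)$ of \eqref{eq:HP1} then produces exactly the $2$-periodic complex with differential $b+B$ whose homology is, by definition, the periodic cyclic homology $HP_*(A)$. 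This settles (a).

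For part (b), derived Morita invariance again lets me assume $\cA=\cD_\perf^\dg(X)$, so the content is the identification of $HP(\cD_\perf^\dg(X))$ with the periodic cyclic homology of the scheme $X$, the latter computed by Zariski hypercohomology of the presheaf $U\mapsto HP(\cO_X(U))$ on affine opens. I would argue by descent, in close analogy with the $K$-theory argument of Proposition~\ref{prop:agreementKH}. When $X=\mathrm{Spec}(A)$ is affine, $\cD_\perf^\dg(X)$ is derived Morita equivalent to $\underline{A}$ and the claim is exactly part (a). For general quasi-compact quasi-separated $X$, a finite cover $X=U\cup V$ by smaller quasi-compact quasi-separated opens induces a Thomason--Neeman localization sequence of dg categories of perfect complexes; applying the localizing functor $C$ turns it into a distinguished triangle, yielding a Mayer--Vietoris sequence for $HP(\cD_\perf^\dg(-))$. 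Since the hypercohomology definition obeys the same Mayer--Vietoris pattern, an induction on the number of charts in an affine cover (intersections of affines in a qcqs scheme again being qcqs) propagates the affine agreement of (a) to all of $X$.

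The main obstacle is exactly this descent step in (b). Two points require care. First, one must verify that an open cover genuinely yields an exact sequence of dg categories of perfect complexes, with the subcategory supported on the closed complement contributing the correct fiber term; this is where Thomason--Neeman localization enters. Second, and more delicately, the $\bbZ/2$-graded periodicization involves an infinite product over $n$, so one must ensure that this product does not disrupt the localization triangle, \ie that $HP$ as in \eqref{eq:HP1} is genuinely localizing (over a field this is immediate, products being exact); this is precisely the property recorded above and it must be tracked through the Mayer--Vietoris bookkeeping. Once descent is secured, the reduction to the affine case is formal, and the quasi-compact quasi-separated hypothesis guarantees that the induction terminates.
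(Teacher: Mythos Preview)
The paper does not supply its own proof of this proposition: it is stated with an attribution to Keller~\cite[Theorem~5.2]{ICM} and left at that. So there is no ``paper's proof'' to compare against beyond the bare citation.

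Your sketch is a reasonable reconstruction of the argument behind Keller's theorem. Part~(a) is unproblematic: derived Morita invariance of $C$ reduces to $\underline{A}$, and on a one-object dg category the mixed complex construction of \cite[Example~7.10]{CT1} specializes to the classical $(C(A),b,B)$, so the periodicization is the usual $HP(A)$. For part~(b) your Mayer--Vietoris induction is indeed the standard route; the localization sequences for $\cD_\perf^\dg$ with support on a closed complement are supplied by Thomason--Trobaugh~\cite{Thomason} (and their dg lifts by Keller), and the localizing property of $C$ converts them into distinguished triangles in $\cD(\Lambda)$. The one point to be precise about is that the comparison should be made at the level of the mixed complex (or cyclic/negative cyclic) before passing to $HP$: Keller's statement in \cite{ICM} identifies $C(\cD_\perf^\dg(X))$ with Weibel's sheaf-hypercohomology mixed complex of $X$, and $HP$ then agrees automatically. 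Arguing directly with $HP$ requires, as you note, that the infinite product commute with the Mayer--Vietoris triangle; over a field this is harmless, but the cleaner bookkeeping is to run the descent in $\cD(\Lambda)$ and only afterwards apply the periodicization functor. With that adjustment your outline matches Keller's.
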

Recall from Feigin-Tsygan \cite{FT} that when $A$ is the coordinate $k$-algebra of an affine algebraic variety $V$ over a field $k$ of characteristic zero, $HP(A)$ is isomorphic to the crystalline cohomology of $V$ defined by Grothendieck~\cite{Grot} as a generalization of de Rham cohomology for singular varieties.
\section{Proof of Theorem~\ref{thm:fundamental}}\label{sec:proof}
We start by studying the behavior of the functor $E$ with respect to gradings.
\begin{lemma}\label{lem:key1}
Let $A=\bigoplus_{n=0}^\infty\, A_n$ be a non-negatively graded $k$-algebra and let $\cA$ be an arbitrary dg category. Then, the dg functor 
\begin{equation}\label{eq:inclusion}
\id \otimes i: \cA\otimes \underline{A_0} \to \cA\otimes \underline{A}
\end{equation}
becomes invertible after application of the functor $E$. 
\end{lemma}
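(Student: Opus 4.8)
The idea is to exploit the three structural properties of $E$ — derived Morita invariance, localization, and $\bbA^1$-homotopy invariance — to reduce the claim about the graded algebra $A$ to a statement about the polynomial algebra $k[t]$, where $\bbA^1$-homotopy invariance applies directly. The natural device here is the \emph{Rees construction}: a non-negatively graded $k$-algebra $A = \bigoplus_{n\ge 0} A_n$ gives rise to the Rees algebra $R = \bigoplus_{n \ge 0} A_n\, t^n \subset A[t]$, which is a graded flat $k[t]$-algebra interpolating between $A_0$ (the fiber at $t=0$) and $A$ (the fiber at $t=1$). Thus I expect the proof to proceed by tensoring the whole picture with $\cA$, applying $E$, and playing off the two specializations against each other.

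\textbf{Step 1.} Form the Rees dg category $\cA \otimes \underline{R}$, where $R = \bigoplus_{n\ge 0} A_n t^n$. There is a dg functor $\underline{k[t]} \to \underline{R}$ (the inclusion of degree-zero-in-$t$... rather, the structural map making $R$ a $k[t]$-algebra), hence $\cA[t] \to \cA \otimes \underline{R}$, and two evaluation dg functors $\cA \otimes \underline{R} \to \cA \otimes \underline{A_0}$ and $\cA \otimes \underline{R} \to \cA \otimes \underline{A}$ induced by $t \mapsto 0$ and $t \mapsto 1$.

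\textbf{Step 2.} Show that $E(\cA[t]) \to E(\cA \otimes \underline{R})$ becomes an isomorphism, or more precisely analyze the composites $\cA[t] \to \cA \otimes \underline{R} \to \cA\otimes\underline{A_0}$ and $\cA[t] \to \cA \otimes\underline{R} \to \cA\otimes\underline{A}$. The first composite is $\id\otimes\mathrm{ev}_0$ precomposed with $\cA[t]\to\cA\otimes\underline{R}$; since $E$ inverts $\cA\to\cA[t]$ by condition (iii) and $\mathrm{ev}_0\circ\iota = \id$, one gets that $E$ applied to $\cA\to\cA\otimes\underline{A_0}$ through this route is an isomorphism automatically. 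The key point will be to compare this with the $t\mapsto 1$ specialization and deduce that $E(\cA\otimes\underline{A_0})\to E(\cA\otimes\underline{A})$ — or its relevant cousin — is invertible. One clean way: exhibit an exact sequence of dg categories relating the kernel of one evaluation to a category on which $E$ can be computed, then use localization (condition (ii)) to control the cofiber; this is exactly the ``careful study of an explicit exact sequence'' advertised in the introduction.

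\textbf{Step 3 (the main obstacle).} The hard part will be organizing the homological bookkeeping so that the two specializations of the Rees construction are genuinely $\bbA^1$-homotopic \emph{after tensoring with an arbitrary $\cA$} and after passing through $E$ — i.e.\ producing the homotopy at the level of dg functors, or equivalently producing the relevant exact sequence of dg categories whose localization triangle forces the comparison map to be an equivalence. Graded flatness of $R$ over $k[t]$ is what should make the exact-sequence input legitimate (so that $- \otimes_k \cA$ and the relevant quotients behave), and I would expect that some filtration or grading argument is needed to reduce to the one-variable case $A = k[t]$ itself, where the statement is the original $\bbA^1$-invariance hypothesis. Once the single-variable base case is in hand, an induction on the number of generators (or a colimit argument over finitely generated graded subalgebras, using that $E$ and $\cD(-)$ commute with the relevant filtered colimits) finishes the general graded algebra.
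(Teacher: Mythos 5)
Your instinct to use the Rees-type interpolation is exactly the right germ: the subalgebra $R=\bigoplus_n A_nt^n\subset A[t]$ is nothing but the image of the $k$-algebra homomorphism $H:A\to A[t]$, $a_n\mapsto a_nt^n$ (well defined precisely because $A$ is graded), and the two specializations $t\mapsto 1$, $t\mapsto 0$ recover $\id_A$ and the composite $i\circ p$, where $p:A\to A_0$ is the projection killing positive degrees. The paper's proof is exactly this and nothing more: since $p\circ i=\id_{A_0}$ on the nose, it suffices to show $E(\id_\cA\otimes(i\circ p))=\id$; and since $\mathrm{ev}_0\circ\iota=\mathrm{ev}_1\circ\iota=\id$ while $E$ inverts $\id\otimes\iota:\cA\otimes\underline{A}\to\cA\otimes\underline{A}\otimes\underline{k[t]}$ by condition (iii), the maps $E(\id\otimes\mathrm{ev}_0)$ and $E(\id\otimes\mathrm{ev}_1)$ are both the inverse of $E(\id\otimes\iota)$, hence equal; composing with $E(\id\otimes H)$ gives $E(\id\otimes(i\circ p))=E(\id)=\id$. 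Only condition (iii) and functoriality are used.

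Your write-up has a genuine gap in the execution. First, you never introduce the retraction $p$ or the identity $p\circ i=\id_{A_0}$, without which comparing the two specializations tells you nothing about invertibility of $\id\otimes i$ itself. Second, your proposed finishing moves are either unnecessary or unsound: there is no useful localization exact sequence here (the ``explicit exact sequence'' advertised in the introduction is the $k[t]\to k[t,t^{-1}]$ sequence in the proof of the main theorem, not in this lemma), flatness of $R$ over $k[t]$ plays no role, and the reduction ``by a colimit argument over finitely generated graded subalgebras, using that $E$ commutes with the relevant filtered colimits'' is illegitimate --- $E$ is an arbitrary functor satisfying (i)--(iii) and is not assumed to preserve any colimits. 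If you replace Steps 2--3 by the observation that $\mathrm{ev}_0$ and $\mathrm{ev}_1$ are forced by (iii) to agree after applying $E$, and precompose with $H$, the argument closes in three lines.
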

\begin{proof}
Let us denote by 
\begin{equation}\label{eq:projection}
\id \otimes p: \cA \otimes \underline{A} \to \cA \otimes \underline{A_0}
\end{equation}
the naturally associated ``projection'' dg functor. The proof consists in showing that the dg functors \eqref{eq:inclusion} and \eqref{eq:projection} become inverse of each other after application of the functor $E$.
Recall from the proof of Proposition~\ref{prop:KH-A1} that we have natural morphisms of $k$-algebras $\iota:~k \to~k[t]$ and $\mathrm{ev}_0, \mathrm{ev}_1: k[t] \to k$ satisfying the relations $\mathrm{ev}_0 \circ \iota =\mathrm{ev}_1 \circ \iota=\id$.  On one hand we have the equality $(\id \otimes p) \circ (\id \otimes i) = \id$. On the other hand, we have the following commutative diagram in $\dgcat$
\begin{equation*}
\xymatrix@C=3em@R=1.5em{
&&& \cA \otimes \underline{A} \\
&&& \cA \otimes \underline{A} \otimes \underline{k} \ar[u]^{\sim}\\
\cA \otimes \underline{A} \ar@/^2.0pc/@{=}[uurrr] \ar@/_2.0pc/[ddrrr]_-{\id \otimes (i\circ p)} \ar[rrr]^-{\id \otimes H} &&& \cA \otimes \underline{A}\otimes \underline{k[t]} \ar[u]_{\id \otimes \id \otimes \mathrm{ev}_1} \ar[d]^{\id \otimes \id \otimes \mathrm{ev}_0}\\
&&& \cA \otimes \underline{A} \otimes \underline{k} \ar[d]^{\sim} \\
&&& \cA \otimes \underline{A}\,,
}
\end{equation*}
where $H$ is the dg functor that sends an homogeneous element $a_n \in A$ of degree $n$ to $a_n\otimes t^n$. Since $\id \otimes (i \circ p) = (\id \otimes i) \circ (\id \otimes p)$ it suffices to show that the dg functors $\id\otimes \id \otimes \mathrm{ev}_0$ and $\id\otimes \id \otimes \mathrm{ev}_1$ become invertible after application of the functor $E$. As a consequence of the relations
\begin{eqnarray*}
(\id \otimes \id \otimes \mathrm{ev}_0) \circ (\id \otimes \id \otimes \iota) = \id && (\id \otimes \id \otimes \mathrm{ev}_1) \circ (\id \otimes \id \otimes \iota) = \id
\end{eqnarray*}
it is enough to show this latter claim for the dg functor $\id \otimes \id \otimes \iota$. Finally, this follows from the fact that the composition
$$\cA \otimes \underline{A} \stackrel{\sim}{\too}  \cA \otimes \underline{A}\otimes \underline{k} \stackrel{\id \otimes \id \otimes \iota}{\too} \cA \otimes \underline{A} \otimes \underline{k[t]}$$
corresponds to the dg functor of condition (iii), with $\cA$ replaced by $\cA\otimes \underline{A}$.
\end{proof}
We now study the localization of the $k$-algebra $k[t]$ at the multiplicative set $\{1,t,t^2,t^3, \ldots\}$. Following Keller~\cite[\S4.1]{Keller} this localization gives rise to an exact sequence of triangulated categories
\begin{equation}\label{eq:ses}
0 \too \cD(k[\xi]/\xi^2) \stackrel{-\otimes X}{\too} \cD(k[t]) \too \cD(k[t,t^{-1}]) \too 0\,.
\end{equation}
Some explanations are in order: $k[\xi]/\xi^2$ is the {\em graded} $k$-algebra of dual numbers, where $\xi$ is of degree $1$, and $X$ is a $k[\xi]/\xi^2\text{-}k[t]$-bimodule $X$ whose restriction to $k[t]$ is the following projective resolution
\begin{equation}\label{eq:complex}
{\bf P}:=(0 \too k[t] \stackrel{\cdot t}{\too} k[t] \too 0)
\end{equation}
of the trivial $k[t]$-module $k$; see~\cite[\S4.1 Example~a)]{Keller} for further details.

Now, recall from \cite[Theorem~5.3]{IMRN} that the category $\dgcat$ carries a Quillen model structure~\cite{Quillen} whose weak equivalences are the {\em derived Morita equivalences}, \ie the dg functors $\cA \to \cB$ which induce an equivalence $\cD(\cA) \stackrel{\sim}{\too}\cD(\cB)$ between the associated derived categories. The homotopy category hence obtained will be denoted by $\Hmo$. Given dg categories $\cA$ and $\cB$, let $\rep(\cA,\cB)$ be the derived category of those $\cA\text{-}\cB$-bimodules which are perfect as $\cB$-modules for every object of $\cA$. The set of morphisms in $\Hmo$ from $\cA$ to $\cB$ is given by the isomorphism classes of objects in this triangulated category $\rep(\cA, \cB)$; see \cite[Corollary~5.10]{IMRN}. Moreover, composition is induced by the (derived) tensor product of bimodules.

Since the above complex \eqref{eq:complex} of $k[t]$-modules is clearly perfect, the $\underline{k[\xi]/\xi^2}\text{-}\underline{k[t]}$-bimodule $X$ belongs to $\rep(\underline{k[\xi]/\xi^2}, \underline{k[t]})$. By combining this fact with \eqref{eq:ses} we obtain an exact sequence of dg categories
\begin{equation*}
0 \too \underline{k[\xi]/\xi^2} \stackrel{X}{\too} \underline{k[t]} \too \underline{k[t,t^{-1}]} \too 0
\end{equation*}
in $\Hmo$. Given an arbitrary dg category $\cA$, Drinfeld's Proposition~\cite[Proposition~1.6.3]{Drinfeld} furnish us then the following exact sequence of dg categories
\begin{equation}\label{eq:ex2}
0 \too \cA \otimes \underline{k[\xi]/\xi^2} \stackrel{\id \otimes X}{\too} \cA \otimes \underline{k[t]} \too \cA \otimes \underline{k[t,t^{-1}]} \too 0\,.
\end{equation}
We now study the behavior of the functor $E$ with respect to the exact sequence \eqref{eq:ex2}. Notice that since $E$ is derived Morita invariant, it descends to the homotopy category $\Hmo$, \ie the localization of $\dgcat$ with respect to the class of derived Morita equivalences. Moreover, since $E$ is also localizing, the above exact sequence \eqref{eq:ex2} is sent to the distinguished triangle
\begin{equation}\label{eq:triang}
E(\cA \otimes \underline{k[\xi]/\xi^2}) \stackrel{E(\id \otimes X)}{\too} E(\cA[t]) \too E(\cA[t,t^{-1}])  \too \Sigma(E(\cA \otimes \underline{k[\xi]/\xi^2})) \,.
\end{equation}
\begin{lemma}\label{lem:key2}
The morphism $E(\id \otimes X)$, in the above triangle \eqref{eq:triang}, is zero.
\end{lemma}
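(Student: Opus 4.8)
The plan is to factor the bimodule morphism $\id \otimes X$ through a dg category on which $E$ is forced to vanish, or at least to exhibit $E(\id \otimes X)$ as a morphism that factors through the difference of two maps which $E$ identifies. The key observation is that the graded $k$-algebra of dual numbers $k[\xi]/\xi^2$ is non-negatively graded, with degree-zero part $k$, so Lemma~\ref{lem:key1} applies: the inclusion $\underline{k} \to \underline{k[\xi]/\xi^2}$ (tensored with $\cA$) becomes invertible after applying $E$. Thus $E(\cA \otimes \underline{k[\xi]/\xi^2}) \simeq E(\cA)$, and under this identification $E(\id \otimes X)$ becomes a morphism $E(\cA) \to E(\cA[t])$ induced by the composite bimodule $\underline{k} \to \underline{k[\xi]/\xi^2} \stackrel{X}{\to} \underline{k[t]}$ in $\Hmo$. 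So it suffices to understand the class in $\rep(\underline{k}, \underline{k[t]})$ of the restriction of $X$ along the unit $k \to k[\xi]/\xi^2$.

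The next step is to identify that restricted bimodule explicitly. The bimodule $X$, viewed as a $k[t]$-module, is the perfect complex $\mathbf{P} = (0 \to k[t] \stackrel{\cdot t}{\to} k[t] \to 0)$, a projective resolution of the trivial module $k$. The $k[\xi]/\xi^2$-action encodes the chain homotopies; restricting the action along $k \to k[\xi]/\xi^2$ simply forgets $\xi$ and leaves the complex $\mathbf{P}$ with its $k[t]$-module structure, i.e. the object of $\rep(\underline{k},\underline{k[t]})$ corresponding to $\mathbf{P}$ (equivalently, to $k$ as a $k[t]$-module via the quotient $k[t] \to k$, $t \mapsto 0$). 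Hence $E(\id \otimes X)$, after the identification above, is the map $E(\cA) \to E(\cA[t])$ induced by the dg functor $\cA = \cA \otimes \underline{k} \to \cA \otimes \underline{k[t]}$ coming from the $k$-algebra map $\mathrm{ev}_0 \colon k[t] \to k$, $t \mapsto 0$ — or rather its ``wrong-way'' bimodule. I would make this precise by comparing the two composites $\underline{k} \stackrel{\iota}{\to} \underline{k[t]}$ and the bimodule $\mathbf{P}$: since $\mathbf{P} \cong k[t]/(t)$ in $\cD(k[t])$ and the bimodule associated to $\iota$ is $k[t]$ itself viewed as a $k$-$k[t]$-bimodule, the relevant comparison is between pushforward along $\mathrm{ev}_0$ and the module $k[t]/(t)$.

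The conclusion then comes from $\bbA^1$-homotopy invariance applied in a slightly indirect way. The dg functor $\id \otimes \iota \colon \cA \to \cA[t]$ is inverted by $E$ (this is condition (iii), rephrased as in the proof of Proposition~\ref{prop:KH-A1}), with inverse $E(\id \otimes \mathrm{ev}_0) = E(\id \otimes \mathrm{ev}_1)$. The morphism $E(\id \otimes X)$ is, up to the canonical identifications, the composite of $E(\id \otimes \mathrm{ev}_0)^{-1} = E(\id \otimes \iota)$ with a map that factors through the cone of $\cdot t$ on $\underline{k[t]}$; concretely, in $\Hmo$ the bimodule $\mathbf{P} \colon \underline{k} \to \underline{k[t]}$ fits in a triangle with the identity bimodule $k[t]$ and its shift-by-$t$, and applying $E$ to this triangle together with $\bbA^1$-invariance shows the connecting map, hence $E(\id \otimes X)$, is zero. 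I would carry this out by writing $\mathbf{P}$ as the cone (in $\rep(\underline{k[t]},\underline{k[t]})$, then pulled back to $\rep(\underline k, \underline{k[t]})$) of the self-map $\cdot t$ of the free bimodule, observing that $E$ sends the free bimodule to the invertible endomorphism $\id$ of $E(\cA[t])$ and that $\cdot t$ becomes, after $\bbA^1$-invariance ($t \mapsto 0$ and $t \mapsto 1$ give the same thing), both $0$ and an isomorphism — forcing $E$ of the cone-inclusion, and therefore $E(\id \otimes X)$, to vanish.

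The main obstacle I anticipate is the careful bookkeeping of the bimodule $X$ and its restriction: one must be sure that restricting the $k[\xi]/\xi^2$-module structure along $k \to k[\xi]/\xi^2$ really yields the object of $\rep(\underline k, \underline{k[t]})$ corresponding to $\mathbf{P}$ (and not some twist), and that the identification $E(\cA \otimes \underline{k[\xi]/\xi^2}) \simeq E(\cA)$ from Lemma~\ref{lem:key1} is compatible with this restriction in the way needed — i.e. that the square relating $\id \otimes i$, $\id \otimes X$, and the pushforward along $\mathrm{ev}_0$ actually commutes in $\Hmo$. Once that compatibility is in hand, the vanishing is a formal consequence of $\bbA^1$-homotopy invariance, since $t$ acts by an endomorphism that is simultaneously null (via $\mathrm{ev}_0$) and invertible (via the $\bbA^1$-identification), which is only possible on a zero object — but here it is the \emph{difference} that is relevant, and the triangle makes the connecting map null.
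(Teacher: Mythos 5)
Your first half is exactly the paper's reduction: using Lemma~\ref{lem:key1} to invert $E(\id\otimes i)$ for the unit $i\colon \underline{k}\to\underline{k[\xi]/\xi^2}$, and observing that $X\circ i={\bf P}$ in $\Hmo$, so that everything comes down to showing $E(\id\otimes{\bf P})=0$. The gap is in how you then get that vanishing. You propose to ``apply $E$ to the triangle'' $k[t]\stackrel{\cdot t}{\to}k[t]\to{\bf P}$ in $\rep(\underline{k},\underline{k[t]})$ and to invoke $\bbA^1$-homotopy invariance, but $E$ is a functor on $\dgcat$ (or on $\Hmo$), not on the hom-categories $\rep(\cA,\cB)$: a priori the assignment sending an object of $\rep(\underline{k},\underline{k[t]})$ to a morphism $E(\cA)\to E(\cA[t])$ is just a map of sets, and nothing you have said explains why it should be compatible with cones in $\rep$. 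Likewise, $\cdot t$ is an \emph{endomorphism of a bimodule} (a $2$-morphism), not a morphism of dg categories, so the identity $E(\mathrm{ev}_0)=E(\mathrm{ev}_1)$ supplied by $\bbA^1$-invariance says nothing directly about it; the claim that $\cdot t$ ``becomes both $0$ and an isomorphism'' conflates these two levels.

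The missing ingredient is additivity, which is a consequence of the \emph{localization} property, not of $\bbA^1$-invariance. The paper factors $E$ through the universal additive category $\Hmo_0$, whose hom-groups are the Grothendieck groups $K_0\rep(\cA,\cB)$: since $E$ is derived Morita invariant and localizing (hence additive), it descends to $\Hmo_0$, and there the class of ${\bf P}=\mathrm{cone}(\cdot t)$ is $[k[t]]-[k[t]]=0$. Tensoring with $\cA$ (using that $\cU$ is symmetric monoidal) then gives $E(\id\otimes{\bf P})=0$. Note that this vanishing already holds for the non-connective $K$-theory functor $\bbK$, which is localizing but \emph{not} $\bbA^1$-invariant --- a sign that your attribution of the mechanism to homotopy invariance cannot be right. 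In the paper $\bbA^1$-invariance enters only through Lemma~\ref{lem:key1}, i.e.\ in the step you already carried out correctly.
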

\begin{proof}
We start by briefly recalling from \cite{IMRN} the construction, and the universal property, of the additive category $\Hmo_0$. Its objects are all dg categories. Given dg categories $\cA$ and $\cB$, the abelian group of morphisms in $\Hmo_0$ from $\cA$ to $\cB$ is the Grothendieck group $K_0\rep(\cA, \cB)$ of the triangulated category $\rep(\cA, \cB)$. Composition is induced, as in $\Hmo$, by the (derived) tensor product of bimodules; see \cite[\S6]{IMRN}. Notice that we have a natural (composed) functor
\begin{equation}\label{eq:univ}
\cU: \dgcat \too \Hmo \too \Hmo_0\,.
\end{equation}
It maps a dg functor $\cA \to \cB$ to the corresponding $\cA\text{-}\cB$-bimodule in $\rep(\cA, \cB)$, and a $\cA\text{-}\cB$-bimodule in $\rep(\cA, \cB)$ to the corresponding class in the Grothendieck group $K_0\rep(\cA, \cB)$. In \cite[Theorem~6.3]{IMRN} it was proved that \eqref{eq:univ} is the {\em universal} functor, with values in an additive category, which is simultaneously derived Morita invariant (see condition (i)) and additive. By additive we mean that it sends split exact sequences of dg categories (see \cite[\S13]{Duke}) to direct sums
\begin{eqnarray}\label{eq:additivity}
\xymatrix@C=1.5em@R=1.0em{
0 \ar[r] &  \cA \ar[r]  & \cB \ar[r]  \ar@/_0.5pc/[l] & \cC \ar[r] \ar@/_0.5pc/[l] &  0
} &\mapsto&
\cU(\cA) \oplus \cU(\cC) \stackrel{\sim}{\to} \cU(\cB)\,,
\end{eqnarray}

Moreover, the symmetric monoidal structure on $\dgcat$ (given by the tensor product) was extended to $\Hmo$ and to $\Hmo_0$, making the functor \eqref{eq:univ} symmetric monoidal.

We now study the morphism $E(\id \otimes X)$ making use of the above functor \eqref{eq:univ}. Observe that we have the following commutative diagram in $\Hmo$
\begin{equation}\label{eq:diagram1}
\xymatrix{
\underline{k[\xi]/\xi^2} \ar[rr]^X && \underline{k[t]} \\
\underline{k} \ar[u]^{i} \ar[urr]_{\bf P} && \,.
}
\end{equation}
The description given in \eqref{eq:complex} show us that we can identify ${\bf P}$ with the mapping cone complex (see \cite[\S1.5.1]{Weibel}) of the morphism
$$ \cdot t: k[t] \too k[t]$$
between complexes of $k[t]$-modules concentrated in degree zero. Therefore, in the derived category of perfect complexes of $k[t]$-modules, which is naturally equivalent to $\rep(\underline{k},\underline{k[t]})$, we obtain a triangle
$$ k[t] \stackrel{\cdot t}{\too} k[t] \too {\bf P} \too \Sigma (k[t])\,.$$
This implies that the class of the $\underline{k}\text{-}\underline{k[t]}$-bimodule ${\bf P}$ in the Grothendieck group $K_0 \rep(\underline{k}, \underline{k[t]})$ is trivial. In terms of the additive category $\Hmo_0$, this means that ${\bf P}$ becomes the zero morphism in $\Hmo_0$. By tensoring \eqref{eq:diagram1} with an arbitrary dg category $\cA$ we obtain the following commutative diagram in $\Hmo$
\begin{equation}\label{eq:diagram2}
\xymatrix{
\cA\otimes \underline{k[\xi]/\xi^2} \ar[rr]^{\id \otimes X} && \cA \otimes \underline{k[t]} \\
\cA \otimes \underline{k} \ar[u]^{\id \otimes i} \ar[urr]_{\id \otimes {\bf P}} && \,.
}
\end{equation}
Since the morphism ${\bf P}$ becomes zero in the additive category $\Hmo_0$ and the functor \eqref{eq:univ} is symmetric monoidal, the morphism $\id \otimes {\bf P}$ in diagram \eqref{eq:diagram2} also becomes zero in $\Hmo_0$. Now, note that every triangulated category is in particular an additive category, and that the notion of localizing functor of condition (ii) implies the above notion~\eqref{eq:additivity} of additive functor. Therefore, since $E$ is simultaneously derived Morita invariant and localizing, the universal property of \eqref{eq:univ} allow us to conclude that $E$ factors through $\Hmo_0$. We obtain then a commutative diagram
$$
\xymatrix{
\dgcat\ar[d]_{\cU} \ar[rr]^E && \cT \\
\Hmo_0 \ar[urr]_{\overline{E}} && \,,
}
$$
with $\overline{E}$ an additive functor. These arguments imply that the morphism $E(\id \otimes {\bf P})$ is zero. Finally, since the $k$-algebra $k[\xi]/\xi^2$ admits a natural non-negatively graded graduation with $(k[\xi]/\xi^2)_0=k$, the above commutative diagram \eqref{eq:diagram2} combined with Lemma~\ref{lem:key1} allow us to conclude that the morphism $E(\id \otimes X)$ is zero. 
\end{proof}
We are now ready to finish the proof of Theorem~\ref{thm:fundamental}. Thanks to Lemma~\ref{lem:key2}, the morphism $E(\id \otimes X)$ is zero and so by \cite[Corollary~1.2.7]{Neeman} the triangle \eqref{eq:triang} splits and we obtain
\begin{equation}\label{eq:isom2}
E(\cA[t,t^{-1}]) \simeq E(\cA \otimes \underline{k[t]}) \oplus \Sigma(E(\cA \otimes \underline{k[\xi]/\xi^2}))\,.
\end{equation}
Notice that the $k$-algebras $k[t]$ and $k[\xi]/\xi^2$ admit natural non-negatively graded graduations with $(k[t])_0=k$ and $(k[\xi]/\xi^2)_0=k$. Therefore, since $\cA \otimes \underline{k} \simeq \cA$, Lemma~\ref{lem:key1} combined with isomorphism \eqref{eq:isom2} furnishes finally us the desired isomorphism
$$E(\cA[t,t^{-1}]) \simeq E(\cA) \oplus \Sigma(E(\cA))\,.$$
This concludes the proof.
\medbreak
\noindent\textbf{Acknowledgments:} The author is very grateful to Paul Balmer for stimulating questions, to Christian Haesemeyer for important discussions, to Bernhard Keller for valuable conversations and for his enthusiasm regarding these results, and to the anonymous referee for his comments and suggestions which greatly improved the article. The author would also like to thank the Department of Mathematics at UCLA for its hospitality and excellent working conditions, where this work was initiated.

\end{document}